\newtheorem{theorem}{Theorem}[section]
\newtheorem{lemma}[theorem]{Lemma}
\newcommand\figcaption{\def\@captype{figure}\caption}
\newcommand\tabcaption{\def\@captype{table}\caption}
\newtheorem{conjecture}[theorem]{Conjecture}
\newcommand{\remark}{\medskip\par\noindent {\bf Remark.~~}}
\newcommand{\mad}{{\rm mad}}
\newcommand{\moddd}{{\rm mod} \ }
\author[Oothan Nweit \and Daqing Yang]{Oothan Nweit \thanks{E-mail: \small\texttt{oothannweit@gmail.com}.} 
  \and Daqing Yang \thanks{Corresponding author,  grant numbers: 
  	NSFC 12271489, U20A2068. E-mail: \small\texttt{dyang@zjnu.edu.cn}.}}
\title[On the mod $k$ chromatic index of graphs]{On the mod $k$ chromatic index of graphs} 
\affiliation{
  % one line per affiliation, no postal codes, grant numbers or similar
  % INRIA, France\\
  % ICube, university of Strasbourg, France\\
  School of Mathematical Sciences, Zhejiang Normal University, Jinhua, Zhejiang, China} 
\keywords{edge coloring, modulo, orientation, maximum average degree} 
\begin{document}

% This is only used if you are compiling for a volume before vol 25
% \publicationdetails{VOL}{2015}{ISS}{NUM}{SUBM}
% This is the new form of collecting the data, starting with vol 25
%\publicationdata
\publicationdata
{vol. 26:3}{2024}{16}{10.46298/dmtcs.13187}{2024-03-07; 2024-03-07; 2024-10-22}{2024-10-24}
%{vol. 25:3 special issue for main purpose}
%{2022}
%{1}
%{10.46298/dmtcs.10472}
%{1998-10-14; 1998-10-14; 2002-07-19; 2014-02-05; 2015-09-09; 2022-12-25}
%{2022-12-3}
%{2022-12-3; None}
%{2023-1-1}
\maketitle
\begin{abstract}
  For a graph $G$ and an integer $k\geq 2$, a $\chi'_{k}$-coloring of $G$ is an edge coloring of $G$ such that the subgraph induced by the edges of each color has all degrees congruent to $1 ~ (\moddd k)$, 
  and $\chi'_{k}(G)$ is the minimum number of colors in a $\chi'_{k}$-coloring of $G$.  
  In [\lq\lq The mod $k$ chromatic index of graphs is $O(k)$", J. Graph Theory. 2023; 102: 197-200], Botler, Colucci and Kohayakawa proved that $\chi'_{k}(G)\leq 198k-101$ for every graph $G$. In this paper, we show that $\chi'_{k}(G) \leq 177k-93$.  
\end{abstract}

%%%%%%%%%%%%%%%%%%%%%%%%%%%%%%%%%%%%%%%%%%%%%%%%%%%%%%%%%%%%%%%%%%%%%%%%%%

\section{Introduction}
All graphs considered here are simple.  
Let $G=(V, E)$ be a graph,  and $v(G):=|V(G)|$ and $e(G):=|E(G)|$.  
If $X \subseteq V (G)$, then $G[X]$ is the subgraph of $G$ induced by $X$.
% and $G-X$ is the graph obtained from $G$ by deleting vertices in $X$.  
% For a graph $G=(V, E)$ and 
For an integer $k \geq 2$, a $\chi'_{k}$-coloring of $G$ is a coloring of the edges of $G$ such that the subgraph induced by the edges of each color has all degrees congruent to $1 ~ (\moddd k)$, 
and the $\moddd k$ chromatic index of graph $G$, denoted by $\chi'_{k}(G)$, is the minimum number of colors in a $\chi'_{k}$-coloring of $G$. \cite{pyber} proved that $\chi'_{2}(G)\leq 4$ for every graph $G$ and asked whether $\chi'_{k}(G)$ is bounded by some function of $k$ only. \cite{scott} proved that $\chi'_{k}(G)\leq 5k^{2}\log k$ for any graph $G$, and in turn asked if  $\chi'_{k}(G)$ is in fact bounded by a linear function of $k$.   
%Botler, Colucci, and Kohayakawa 
\cite{botler} answers Scott's question affirmatively by proving  the following theorem. 
% that $\chi_{k}'(G) \leq 198k-101$, this the authors 
\begin{theorem} [\cite{botler}] \label{BCK} 
	For every graph $G$ we have $\chi'_{k}(G) \leq 198k-101$. 
\end{theorem}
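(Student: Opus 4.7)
The plan is to argue via a density-driven induction: at each step, either $G$ is sparse enough ($\mad(G) = O(k)$) to admit a direct coloring, or $G$ contains a dense substructure that can be peeled off using a single color class. More precisely, I would aim to show (i) every graph with $\mad(G) \leq d$ satisfies $\chi'_{k}(G) \leq \alpha d + \beta k + \gamma$ for small constants $\alpha,\beta,\gamma$, and (ii) every graph with sufficiently high minimum degree contains a nonempty spanning subgraph all of whose vertex degrees are $\equiv 1 \pmod{k}$. Together these suffice: part (ii) lets one peel off a color class whenever $\mad(G)$ exceeds the base-case threshold, and part (i) finishes the coloring once the density drops low enough.

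For (i), the natural approach is orientation-based. By a Hakimi-type theorem, a graph with $\mad(G) \leq d$ admits an orientation with maximum in-degree at most $\lceil d/2 \rceil$. Using this orientation, each edge $uv$ (directed from $u$ to $v$) is assigned a color determined by the rank of $uv$ among the in-edges at $v$, grouped into blocks modulo $k$. The resulting color classes then have almost the right degree residues at every vertex, and a small number of additional ``repair'' colors---used to fix the vertices whose residue is off---suffices to convert this into a genuine $\chi'_{k}$-coloring.

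For (ii), the key tool is an algebraic or extremal result guaranteeing a spanning subgraph whose every vertex has degree $\equiv 1 \pmod{k}$ in any sufficiently dense graph. A natural candidate is the polynomial method in the style of Alon--Linial--Meshulam for mod-$k$ group connectivity, which shows that any graph of minimum degree at least some linear function of $k$ supports such a subgraph. Once one is found, removing it as a single color class decreases the minimum degree by at most $k-1$, so one can iterate at most $O(k)$ times before the sparsity regime kicks in and (i) closes out the coloring.

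The main obstacle lies in tracking constants through both (i) and (ii). A loose per-step estimate in the orientation-based coloring, or a slack threshold in the density bound of (ii), propagates quickly: even a constant factor of two extra in any step pushes the final bound well past $198k - 101$. Landing the precise constant requires designing the repair mechanism of (i) to use only an absolutely constant number of extra colors beyond the $\lceil d/2 \rceil$ blocks, and calibrating the density threshold in (ii) to be as small as the algebraic method permits; the heart of the technical work is almost certainly in these two optimizations.
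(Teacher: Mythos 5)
Your overall dichotomy (dense case: peel off a mod-$k$ subgraph; sparse case: orient and color greedily) is the right general flavor, but both halves have genuine gaps as stated. The most serious is the iteration in (ii). Removing a subgraph in which every vertex has degree $\equiv 1 \pmod k$ may lower each degree by exactly $1$ (e.g.\ the subgraph is a perfect matching), so peeling one color class per round from a graph of large minimum degree can require far more than $O(k)$ rounds; your claim that the minimum degree drops ``by at most $k-1$'' per round is in any case the wrong direction and does not bound the number of colors consumed. The actual argument avoids iteration entirely: one takes a single \emph{maximal} subgraph $H$ with all degrees $\equiv 1 \pmod k$ (one color), and maximality forces the leftover graph restricted to $V(H)$ to contain no nonempty $k$-divisible subgraph at all --- if it did, adding it to $H$ would preserve all residues and contradict maximality. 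Sparsity, namely $e < 2(12k-6)v$, then follows from Mader's theorem (dense graphs contain highly connected subgraphs) combined with Thomassen's theorem (highly edge-connected graphs of even order have spanning subgraphs with degrees $\equiv k \pmod{2k}$, hence $k$-divisible). Note also that your existence claim in (ii) is both stronger than what is needed and false as stated: a \emph{spanning} subgraph with all degrees $\equiv 1 \pmod k$ is obstructed by parity (for $k$ even it forces an even number of vertices), minimum degree alone does not supply the edge-connectivity Thomassen's theorem requires, and the relevant residue class in the known tools is $k \pmod{2k}$ (i.e.\ $k$-divisibility), not $1 \pmod k$.

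The second gap is in (i). Obtaining an orientation with bounded out- or in-degree from $\mathrm{mad}(G)=O(k)$ is indeed standard, but assigning colors by the rank of an edge among the in-edges at its head, plus ``a small number of repair colors,'' does not address the actual difficulty: each color class must have degree $\equiv 1 \pmod k$ at \emph{both} endpoints of every edge simultaneously, and a vertex's out-edges land in color classes whose residues are controlled at the other end, not at that vertex. This is precisely where the known proofs do their work: one processes vertices along a suitable linear order, splits the palette into two pools (for out-edges and in-edges), and runs a counting/exchange argument (in this paper, via a maximal recolorable set of in-edges and an auxiliary bipartite availability graph) to guarantee that the number of same-colored in-edges at each vertex is $\equiv 1 \pmod k$ while keeping out-edges rainbow. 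Nothing in your sketch shows that an absolutely constant number of repair colors suffices, and it is not clear it does; without a concrete mechanism here the bound $198k-101$ (or any $O(k)$ bound) is not established.
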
 

Also in \cite{botler}, Botler, Colucci, and Kohayakawa proposed the following conjecture: 
\begin{conjecture} [\cite{botler}] \label{conj}
	There is a constant $C$ s.t. $\chi'_{k}(G) \leq  k + C$ for every graph $G$.  	
\end{conjecture}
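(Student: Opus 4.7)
The plan is to attack Conjecture~\ref{conj} by refining the two-stage strategy implicit in every existing $O(k)$ bound: a sparse decomposition of $G$ into pieces of bounded maximum average degree, followed by a near-tight coloring of each such piece. The known proofs use a leading constant $c>1$ in both stages, which accounts for the large leading coefficients $198$ and $177$; forcing $c$ down to $1+o(1)$ is the heart of the conjecture.

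For the first stage I would seek to edge-partition $G$ into $C_1=O(1)$ subgraphs $H_1,\ldots,H_{C_1}$ satisfying $\mad(H_i)\le k+o(k)$. Current decompositions (e.g., iteratively peeling off a sparse subgraph whose existence follows from a maximum-average-degree argument, or removing a forest) yield $\mad(H_i)$ proportional to $k$ but with constant strictly above $1$. To reach the threshold $k$ I would try a fractional or flow-based orientation argument: find an orientation whose maximum out-degree is as close to $k/2$ as possible (e.g., via a Hakimi-style orientation theorem), then split the edges according to the residue of the tail's out-degree modulo $k$.

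For the second stage, given $H$ with $\mad(H)\le k+o(k)$, I would attempt to show $\chi'_{k}(H)\le 1+O(1)$ by orienting $H$ so that every out-degree is congruent to a prescribed residue modulo $k$, which exists by Eulerian/group-connectivity type arguments once the average out-degree is close to $k/2$. Each vertex then bundles its outgoing edges into groups of size $\equiv 1\ (\moddd k)$, and the remaining challenge is to coordinate these bundles across vertices so that each global color class is a valid $\chi'_{k}$-class, i.e.\ both endpoints of every edge see a degree $\equiv 1\ (\moddd k)$ in that color.

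The main obstacle, and the reason the conjecture is still open, is exactly this global coordination. With leading constant $c>1$ one has $(c-1)k$ spare colors to absorb local inconsistencies; with leading constant $1$ essentially every color class must be used perfectly, which is closer in spirit to Vizing's theorem than to the easier Kempe-chain bound $\chi'\le \Delta+1$. I would try to resolve this via a Misra--Gries-style alternating recoloring adapted to the modular setting, or via the combinatorial Nullstellensatz applied to a polynomial whose nonvanishing encodes a valid $\chi'_{k}$-coloring. A realistic intermediate goal would be $\chi'_{k}(G)\le k+o(k)$, which itself would be a substantial advance over Theorem~\ref{BCK} and over the improvement $\chi'_{k}(G)\le 177k-93$ established in the present paper.
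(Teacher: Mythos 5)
You are addressing Conjecture~\ref{conj}, which the paper attributes to \cite{botler} and does \emph{not} prove: it is stated as an open problem, and the paper's actual contribution is the far weaker bound $\chi'_{k}(G) \leq 177k-93$ of Theorem~\ref{Main-thm}. Your text is likewise not a proof but a research programme, and you concede this yourself when you describe the global coordination step as ``the reason the conjecture is still open.'' There is therefore nothing to verify against the paper; every substantive step of your outline is left conjectural.

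Beyond that, your second stage is false as stated. You propose to show that $\mad(H)\leq k+o(k)$ implies $\chi'_{k}(H)\leq 1+O(1)$. The star $K_{1,k}$ has $\mad(K_{1,k})=2k/(k+1)<2$, yet $\chi'_{k}(K_{1,k})=k$: each colour class must meet the centre in a number of edges congruent to $1~(\moddd k)$, hence in exactly one edge since the total degree is $k$, so $k$ colours are forced. Sparseness therefore cannot reduce the number of colours below $k$; this is precisely why Lemma~\ref{Key} reads $\chi'_{k}(G)\leq 7d+2k-3$ with an additive $2k$ term, and why the conjectured bound is $k+C$ rather than $O(1)$ --- note also that combining your two stages literally ($C_1=O(1)$ pieces, each needing $1+O(1)$ colours) would yield $\chi'_{k}(G)=O(1)$, which the star already refutes. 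Your first stage is equally unsupported: the decomposition actually available in the paper (Lemma~\ref{botler1} together with Theorem~\ref{decom}) gives $\mad(G'[V(H)])<48k-24$, a leading constant of $48$, and no technique is offered that would bring this down to $1+o(1)$. The proposal identifies a genuine difficulty but resolves none of it, and the conjecture remains open.
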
 

%$\forall$ graph $G$.
%every   
%Note that the 
%The multiplicative constant in Conjecture \ref{conj} would be best possible, as  $\chi'_{k}(K_{1, k})  = k$. % where $K_{1, k}$.

In this paper, we improve the upper bound of the mod $k$ chromatic index of graphs by proving the following theorem. 
\begin{theorem}  \label{Main-thm} 
	For every graph $G$ we have $\chi'_{k}(G) \leq 177k-93$.  
\end{theorem}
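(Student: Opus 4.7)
The plan is to follow the two-step framework underlying Theorem~\ref{BCK} and sharpen the constants in each step, rather than introduce a new technique. That framework first bounds $\chi'_{k}$ for graphs of small maximum average degree, and then decomposes an arbitrary graph, via an orientation argument, into a controlled number of subgraphs of small $\mad$; the final bound is obtained by applying the sparse-case lemma to each piece with disjoint palettes.

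For the sparse case I would prove a lemma of the form: if $\mad(G) < D$ for some threshold $D$ linear in $k$, then $\chi'_{k}(G) \leq \alpha k + \beta$ for appropriate small constants $\alpha,\beta$. The argument would take a low-out-degree orientation of $G$ (whose existence follows from Hakimi's theorem linking orientations to density), process vertices in a convenient order, and reserve a small pool of colors at each vertex so that every color class incident to that vertex has degree $\equiv 1 \pmod k$ there. The quantitative gain over the analogous lemma in \cite{botler} would come from shrinking the per-vertex pool --- either by reusing colors across vertices whose neighborhoods are well-separated under the chosen orientation, or by allocating the pool globally from a smaller palette whenever a local independence condition holds.

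For the decomposition step, I would iteratively peel off a subgraph witnessing the current value of $\mad$, strictly reducing the $\mad$ of the residue in each round, until the residue falls under the threshold $D$ of the sparse case. Applying the sparse-case lemma to each peel with a fresh palette yields the overall bound. The improvement here would come from showing that each peeled piece can be taken somewhat denser than in \cite{botler}, so that fewer rounds are needed, while ensuring that the extra density does not inflate the per-piece coloring cost by more than a matching amount.

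The main obstacle is the joint optimization of these two ingredients: tightening the sparse-case constants $\alpha,\beta$ typically forces a smaller threshold $D$, which increases the number of peeling rounds and so multiplies the total color count. The final constant $177$ and additive term $-93$ will emerge only from a carefully balanced choice of threshold, orientation parameters, and pool size, and most of the technical work will consist of verifying that this balance goes through cleanly --- the qualitative skeleton of the argument will still follow that of \cite{botler}.
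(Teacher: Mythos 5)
Your sketch is missing the two ingredients that actually carry the proof, and the global decomposition you propose would not work as stated. On the sparse case: the real content is a concrete lemma (Lemma~\ref{Key}) saying that if $G$ admits an orientation with $\Delta^{+}\le d$, then $\chi'_{k}(G)\le 7d+2k-3$. Its proof orders the vertices so that every vertex has at most $d$ in-neighbors preceding it (Lemma~\ref{HM-LL}), splits the palette into $C_1$ of size $3d-1$ for out-edges and $C_2$ of size $4d+2k-2$ for in-edges, and, when a vertex is processed, uses an auxiliary bipartite graph and a counting argument to color a maximal set of its uncolored in-edges so that every color class among them has size $\equiv 1 ~(\moddd k)$, leaving fewer than $d+k$ edges that get distinct fresh colors. ``Reserving a small pool of colors at each vertex'' is not a mechanism for achieving the residue condition at the vertex currently being processed; that is exactly where the work lies, and your proposal gives no argument for it, nor any way to see where constants like $7d+2k-3$ (hence $177k-93$) would come from.

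On the global step: the paper does not iteratively peel dense subgraphs. It removes a \emph{single} maximal subgraph $H$ with all degrees $\equiv 1~(\moddd k)$, which costs exactly one color, and uses maximality together with Lemma~\ref{botler1} (which rests on Mader and Thomassen, not on any peeling) to conclude that $G'[V(H)]$ has no nonempty $k$-divisible subgraph, hence $\mad(G'[V(H)])<2(24k-12)$, hence by Theorem~\ref{decom} an orientation with $\Delta^{+}\le 24k-12$; since $V(G)\setminus V(H)$ is independent and each vertex of $H$ has at most $k-1$ neighbors there, one gets an orientation of $G'$ with $\Delta^{+}\le 25k-13$, and then $7(25k-13)+2k-3+1=177k-93$. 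Your peeling scheme has two unrepaired defects: the pieces you peel off are precisely the \emph{dense} parts, to which no sparse-case lemma applies, so they are never colored; and without the no-$k$-divisible-subgraph sparsity lemma there is no a priori bound on $\mad(G)$, hence no bound on the number of rounds, so with a fresh palette per round the total number of colors is not even bounded by a function of $k$. As written, the proposal is a restatement of the goal plus an incorrect outline, not a proof.
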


%For a graph $G$, we let $e (G)$ denote the number of edges and $v (G)$ denote the number of vertices, of $G$.  

%For a graph $G=(V, E)$,  let $v(G):=|V(G)|$ and $e(G):=|E(G)|$. 
In the proof of Theorem \ref{BCK},  %the authors 
%Botler et al. in  
\cite{botler} applies the  following two lemmas. 

\begin{lemma}[\cite{Mader}] \label{Mader}
	If $k \ge 1$, $G$ is a graph with $e (G) \ge 2kv(G)$, then $G$
	contains a $k$-connected subgraph.  
\end{lemma}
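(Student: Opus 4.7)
My plan is to prove Mader's lemma by a minimal-subgraph argument with a carefully shifted density potential. For each subgraph $H$ of $G$ set $\epsilon(H) := e(H) - 2k\bigl(v(H) - k\bigr)$. The hypothesis $e(G) \geq 2kv(G)$ yields $\epsilon(G) \geq 2k^{2}$, and combined with the trivial bound $e(G) \leq v(G)(v(G)-1)/2$ it also forces $v(G) \geq 4k+1$, so the family $\mathcal{F} := \{H \subseteq G : v(H) \geq 2k,\ \epsilon(H) \geq 0\}$ contains $G$ itself. Let $H$ be a subgraph-minimal element of $\mathcal{F}$; I claim $H$ is $k$-connected.

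I would first establish $\delta(H) \geq 2k+1$. The value $v(H) = 2k$ is ruled out because then $e(H) \leq 2k^{2}-k < 2k^{2}$ would contradict $\epsilon(H) \geq 0$, so $v(H) \geq 2k+1$. If some $v \in V(H)$ had $\deg_H(v) \leq 2k$, then $H - v$ would still lie in $\mathcal{F}$ since $v(H-v) \geq 2k$ and $\epsilon(H-v) = \epsilon(H) - \deg_H(v) + 2k \geq 0$, contradicting the minimality of $H$.

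The crux is the connectivity step. Suppose, for contradiction, that $H$ is not $k$-connected; fix a separator $S$ with $|S| \leq k-1$ and a partition $V(H)\setminus S = A \sqcup B$ into nonempty pieces with no $A$--$B$ edges, and set $H_A := H[A \cup S]$ and $H_B := H[B \cup S]$. Since every $a \in A$ has all of its $\geq 2k+1$ neighbors inside $A \cup S$, we have $|A \cup S| \geq 2k+2$, and similarly for $B$; thus $H_A$ and $H_B$ both meet the vertex constraint of $\mathcal{F}$. Being proper subgraphs of $H$, minimality forces $\epsilon(H_A) < 0$ and $\epsilon(H_B) < 0$. Summing and plugging in $v(H_A)+v(H_B) = v(H)+|S|$, $e(H_A)+e(H_B) \geq e(H) \geq 2k(v(H)-k)$, and $|S| \leq k-1$ will collapse to $\epsilon(H_A)+\epsilon(H_B) \geq 2k(k-|S|) \geq 2k > 0$, contradicting the strict sign obtained from minimality.

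The main subtlety to get right is the calibration of the shift defining $\epsilon$: using the raw $e(H)-2kv(H)$ leaves a slack of order $k^{2}$ coming from the $2k|S|$ term that the separator argument cannot absorb, while a heavier shift would admit tiny witnesses in $\mathcal{F}$ that prevent both halves of the separation from meeting the vertex threshold $2k$. The shift by $+2k^{2}$, encoded as the substitution $v(H) \mapsto v(H)-k$ inside the density bound, exactly balances the separator slack and yields the strict, unconditional contradiction.
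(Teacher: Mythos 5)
Your proof is correct, but note that the paper itself offers no proof of this statement: Lemma~\ref{Mader} is imported verbatim from Mader's theorem (average degree at least $4k$ forces a $k$-connected subgraph) and used as a black box, so there is no in-paper argument to compare against. What you have written is essentially the classical Mader/Diestel proof, organized around the shifted potential $\epsilon(H)=e(H)-2k\bigl(v(H)-k\bigr)$ rather than the textbook threshold $e(H)>(2k-3)\bigl(v(H)-k+1\bigr)$; the two calibrations play the same role, and yours is arguably cleaner. The three steps all check out: $v(H)=2k$ is excluded since $\binom{2k}{2}=2k^{2}-k<2k^{2}$; deleting a vertex of degree at most $2k$ changes $\epsilon$ by $-\deg_H(v)+2k\ge 0$, giving $\delta(H)\ge 2k+1$; and in the separation step, $v(H_A)+v(H_B)=v(H)+|S|$ together with $e(H_A)+e(H_B)\ge e(H)\ge 2k\bigl(v(H)-k\bigr)$ indeed yields $\epsilon(H_A)+\epsilon(H_B)\ge 2k\bigl(k-|S|\bigr)\ge 2k>0$, contradicting $\epsilon(H_A),\epsilon(H_B)<0$, while $\delta(H)\ge 2k+1$ guarantees both sides meet the vertex threshold. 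The only implicit assumption is that $G$ is nonempty (needed to divide by $v(G)$ when deducing $v(G)\ge 4k+1$, and the lemma is vacuous or false for the empty graph anyway), which is the same convention the cited statement uses; apart from that cosmetic point, your argument is a complete, self-contained replacement for the citation.
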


\begin{lemma}[\cite{Thomassen}] \label{Thomassen}
	If $k \ge 1$ and G is a $(12k - 7)$-edge-connected graph with an 
	even number of vertices, then $G$ has a spanning subgraph in which each vertex has degree congruent to $k ~ (\moddd 2k)$. 
\end{lemma}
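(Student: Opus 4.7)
The proof follows the architecture of Botler, Colucci, and Kohayakawa's proof of Theorem \ref{BCK}, refining its constants. We argue by induction on $e(G)$, splitting into a \emph{sparse regime} and a \emph{dense regime} that meet at a density threshold $M = \Theta(k)$ to be calibrated.

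In the sparse regime, when $\mad(G) \le M$, we orient $G$ with maximum out-degree at most $\lceil M/2 \rceil$ (via Hakimi's theorem for $\mad$), and color each edge by a label in $\{0,1,\ldots,\lceil M/2\rceil-1\}$ determined by its position at the tail. We then group labels into blocks and apply a bounded number of parity-fix repairs so that inside each block every vertex has degree $\equiv 1 ~ (\moddd k)$; the total number of colors used is an explicit linear function of $k$, which we calibrate against the dense-regime count so that their sum does not exceed $177k-93$.

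In the dense regime, when $\mad(G) > M$, Lemma \ref{Mader} yields a subgraph of large average degree, and iteratively discarding low-connectivity pieces produces a $(12k-7)$-edge-connected subgraph $H \subseteq G$; we adjust the parity of $|V(H)|$ by deleting a vertex of minimum degree if needed. Lemma \ref{Thomassen} then provides a spanning subgraph $F \subseteq H$ with every $d_F(v) \equiv k ~ (\moddd 2k)$, i.e. an odd multiple of $k$. Since all degrees of $F$ are multiples of $k$, an Eulerian-type orientation argument, combined with a splitting into $k$-slices, partitions $E(F)$ into a bounded number $c$ of edge-disjoint spanning subgraphs in each of which every degree is $\equiv 1 ~ (\moddd k)$, yielding $c$ colors; the inductive hypothesis then colors $G - E(F)$ with a disjoint palette.

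The main obstacle is to close the induction while honoring the linear constant $177k-93$. Either the dense-regime extraction must be iterated $O(1)$ times with color reuse (requiring that merged color classes still satisfy the mod-$k$ degree condition at every vertex), or a single extraction must already bring $\mad$ below $M$. The saving of $21k-8$ relative to Theorem \ref{BCK} comes from sharpening at least one of three subroutines: (i) a finer decomposition of the Thomassen subgraph $F$, reducing $c$; (ii) a lower sparse-regime coloring cost from a tighter orientation-and-pigeonhole step; or (iii) a smaller density threshold $M$. These must be tuned jointly, since pushing $c$ down tends to require stronger edge-connectivity than $12k-7$, while pushing $M$ down tightens the sparse regime at the cost of enlarging the dense regime; the arithmetic of this trade-off is where the improved bound is earned.
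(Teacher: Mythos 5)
Your proposal does not address the statement at hand. The statement to be proved is Lemma \ref{Thomassen} itself: that every $(12k-7)$-edge-connected graph on an even number of vertices has a spanning subgraph in which every vertex has degree congruent to $k ~(\moddd 2k)$. What you have written instead is a high-level sketch of the proof of the \emph{main theorem} of the paper (the bound $\chi'_k(G)\le 177k-93$), and in the course of that sketch you explicitly invoke Lemma \ref{Thomassen} as a black box. Using the statement you are supposed to prove as one of your ingredients is circular; nothing in your text makes any progress toward establishing the existence of such a spanning subgraph.

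Beyond the misdirection, note that Lemma \ref{Thomassen} is not proved in this paper at all: it is quoted as an external result of Thomassen, whose proof rests on machinery entirely absent from your sketch --- roughly, reductions via edge splitting/lifting that preserve high edge-connectivity, together with tree-packing (Nash-Williams/Tutte) arguments that realize prescribed degree residues modulo $2k$; the parity hypothesis (even number of vertices) is needed because the residues $k~(\moddd 2k)$ must sum to an even total degree. None of the components you describe (orientations with bounded out-degree, Mader's subgraph extraction, a sparse/dense dichotomy, pigeonhole color repairs) bears on this. If your assignment is the main theorem, your outline is at least pointed in the right direction structurally, though it differs from the paper's actual argument (which avoids any induction on $e(G)$ and instead uses a single maximal mod-$k$ subgraph $H$, a maximum-average-degree bound on $G\setminus E(H)$ from Lemma \ref{botler1}, an orientation with bounded out-degree via Theorem \ref{decom}, and a vertex-ordering greedy coloring in Lemma \ref{Key}). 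But as a proof of Lemma \ref{Thomassen}, it is vacuous.
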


A graph $G$ is {\em $k$-divisible} if $k$ divides the degree of each vertex of the graph $G$. 
By applying Lemma \ref{Mader} and Lemma \ref{Thomassen}, 
%Botler et al  
\cite{botler} proved the following lemma.

\begin{lemma}[\cite{botler}] \label{botler1} 
	If graph $G$ does not contain a nonempty $k$-divisible subgraph, then $e(G) < 2(12k - 6)v(G)$. 
\end{lemma}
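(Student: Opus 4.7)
The plan is to prove the contrapositive: if $e(G) \geq 2(12k-6)v(G)$, then $G$ contains a nonempty $k$-divisible subgraph. The strategy is a direct sandwich: use Lemma \ref{Mader} to extract a highly connected subgraph, then feed that subgraph to Lemma \ref{Thomassen} to obtain a spanning factor whose degrees are automatically divisible by $k$.

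First I would apply Lemma \ref{Mader} with parameter $12k - 6$; the hypothesis $e(G) \geq 2(12k-6)v(G)$ is exactly what is required, and it yields a $(12k-6)$-connected subgraph $H$ of $G$, which in particular has at least $12k - 5$ vertices and is $(12k-6)$-edge-connected. To apply Lemma \ref{Thomassen} I need $(12k-7)$-edge-connectivity together with an even number of vertices. If $|V(H)|$ is even, take $H' := H$; otherwise, delete an arbitrary vertex to form $H' := H - v$. In the latter case, the standard fact that vertex-deletion reduces vertex-connectivity by at most one gives that $H'$ is $(12k-7)$-connected, hence $(12k-7)$-edge-connected, with an even number of vertices (still at least $12k - 6 \geq 2$). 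Applying Lemma \ref{Thomassen} to $H'$ produces a spanning subgraph $F$ in which every vertex has degree congruent to $k \pmod{2k}$. Since $k \geq 2$, each such degree is a positive odd multiple of $k$, so every degree of $F$ is positive and divisible by $k$. Thus $F$ is a nonempty $k$-divisible subgraph of $G$, finishing the contrapositive.

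I do not anticipate a genuine obstacle: the constant $12k - 6$ (rather than $12k - 7$) in the edge bound is calibrated precisely so that the connectivity margin survives the single vertex deletion used to fix parity, and the congruence $k \pmod{2k}$ in Thomassen's conclusion immediately delivers divisibility by $k$. The entire argument is a two-lemma composition with one short parity adjustment.
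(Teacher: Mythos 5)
Your proof is correct, and it is essentially the argument indicated in the paper (and carried out in the cited source): apply Lemma~\ref{Mader} with parameter $12k-6$ to extract a $(12k-6)$-connected subgraph, delete one vertex if needed to fix parity while retaining $(12k-7)$-(edge-)connectivity, and then invoke Lemma~\ref{Thomassen} to get a spanning subgraph with all degrees $\equiv k \ (\moddd 2k)$, which is nonempty and $k$-divisible. No gaps; the parity adjustment and the positivity of the degrees are handled correctly.
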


For a graph $G$, let $N_G(v)$ denote the neighbors of $v$, $E_G(v)$ denote the edges  that are incident to $v$, and let $d_G(v)$ be the degree of $v$, i.e., $d_G(v)=|E_G(v)|$.  
Let $\vec{G} = (V, \vec{E})$ be  an orientation of $G$,  
for $v \in V$,
let $N_{\vec{G}}^+(x)$ denote the out-neighbor(s) of $x$, i.e.,
$N_{\vec{G}}^+(x) = \{y: x \rightarrow y \}$, let $d_{\vec{G}}^+(x)$ be the
out-degree of $x$, i.e., $d_{\vec{G}}^+(x) = |N_{\vec{G}}^+(x)|$; 
if $y$ is an out-neighbor of $x$, then we say edge $\overrightarrow{xy}$ an out-edge of $x$. 
Let $N_{\vec{G}}^-(x)$ denote the in-neighbor(s) of $x$, i.e.,
$N_{\vec{G}}^-(x) = \{y: x \leftarrow y \}$, let $d_{\vec{G}}^-(x)$ be the
in-degree of $x$, i.e., $d_{\vec{G}}^-(x) = |N_{\vec{G}}^-(x)|$; 
if $y$ is an in-neighbor of $x$, then we say edge $\overleftarrow{xy}$ an in-edge of $x$. 
Let $\Delta^{+}\left( \vec{G}\right) = \max_{v \in V}
d^+_{\vec{G}}(v)$, $\Delta^{-}\left( \vec{G}\right) = \max_{v \in V}
d^-_{\vec{G}}(v)$. 
We drop the subscripts $G$ or $\vec{G}$ in the above notations when $G$ or $\vec{G}$
is clear from the context. 

The {\em maximum average degree} of a graph $G$, denoted by $\mad(G)$, is defined as  
$$\mad(G) = \max_{H \subseteq G} \frac {2e(H)} {v(H)},$$
which places a bound on the average vertex degree in all subgraphs. 
It has already attracted a lot of attention and has a lot of applications.   
The following theorem is well-known (cf. \cite{SL-hak}, Theorem 4), we use it in our proof of Theorem \ref{Main-thm}.   

\begin{theorem} \label{decom}
	Let $G$ be a graph. Then  $G$ has an orientation $\vec{G}$ 
	such that $\Delta^{+}(\vec{G} ) \le d$ if and only if 
	$\mad (G) \leq 2d$. 
\end{theorem}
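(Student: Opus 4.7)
The plan is to prove both directions separately, with the forward implication being straightforward counting and the reverse implication relying on an extremal-orientation/path-reversal argument.

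For necessity, suppose $\vec{G}$ is an orientation of $G$ with $\Delta^{+}(\vec{G}) \le d$. For any subgraph $H \subseteq G$, inherit the orientation of $\vec{G}$ on $E(H)$ to obtain an orientation $\vec{H}$ of $H$. Each edge of $H$ contributes exactly once to the out-degree sum within $\vec{H}$, so $e(H) = \sum_{v \in V(H)} d^+_{\vec{H}}(v) \le \sum_{v \in V(H)} d^+_{\vec{G}}(v) \le d \cdot v(H)$. Maximizing $2e(H)/v(H)$ over all subgraphs then yields $\mad(G) \le 2d$.

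For sufficiency, the plan is to take an orientation $\vec{G}$ of $G$ that minimizes the potential $\Phi(\vec{G}) := \sum_{v \in V(G)} d^+_{\vec{G}}(v)^2$ and argue by contradiction. Suppose some vertex $v_0$ has $d^+_{\vec{G}}(v_0) \ge d+1$, and let $S$ be the set of vertices reachable from $v_0$ by directed paths in $\vec{G}$ (so $v_0 \in S$). The key exchange observation is: given a directed path $v_0 = w_0 \to w_1 \to \cdots \to w_\ell = u$ in $\vec{G}$, reversing every edge of the path yields a new orientation $\vec{G}'$ in which only $d^+(v_0)$ drops by one and $d^+(u)$ rises by one, since every internal vertex $w_i$ both loses and gains exactly one out-edge. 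A direct computation gives $\Phi(\vec{G}') - \Phi(\vec{G}) = 2\bigl(d^+_{\vec{G}}(u) - d^+_{\vec{G}}(v_0)\bigr) + 2$, which is strictly negative whenever $d^+_{\vec{G}}(u) \le d^+_{\vec{G}}(v_0) - 2$. Minimality of $\Phi$ therefore forces every $u \in S$ to satisfy $d^+_{\vec{G}}(u) \ge d^+_{\vec{G}}(v_0) - 1 \ge d$.

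Finally, set $H = G[S]$. Since every out-edge from a vertex of $S$ terminates inside $S$ by the definition of reachability, we have $e(H) = \sum_{v \in S} d^+_{\vec{G}}(v) \ge (d+1) + (|S|-1) \cdot d = d|S| + 1 > d \cdot v(H)$, and hence $\mad(G) \ge 2e(H)/v(H) > 2d$, contradicting the hypothesis. The main subtlety is the choice of potential: weaker candidates such as $\sum_v \max(d^+(v) - d, 0)$ or the lexicographic out-degree vector can remain constant under some legal path reversals, whereas the sum of squares provides exactly the strict decrease needed to pin every reachable vertex's out-degree down to at least $d$, which in turn produces the overdense subgraph $G[S]$ that breaks the $\mad$ bound.
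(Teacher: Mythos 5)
Your proof is correct. Note, however, that the paper does not prove this statement at all: it is quoted as a well-known result (Hakimi's orientation theorem, cited as Theorem 4 of the reference given there), so there is no in-paper argument to compare against, and what you have written is a self-contained substitute for that citation. Your forward direction is the routine counting argument (every edge of a subgraph $H$ is the out-edge of its tail, so $e(H)\le d\,v(H)$), and your reverse direction is the standard extremal path-reversal proof: minimize $\Phi(\vec{G})=\sum_v d^+_{\vec{G}}(v)^2$, observe that reversing a directed path from a vertex $v_0$ of out-degree at least $d+1$ to a vertex $u$ changes $\Phi$ by $2\bigl(d^+(u)-d^+(v_0)\bigr)+2$, conclude that every vertex reachable from $v_0$ has out-degree at least $d$, and then the reachability-closed set $S$ gives $e(G[S])=\sum_{v\in S}d^+(v)\ge d|S|+1$, contradicting $\mad(G)\le 2d$. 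All the computations check out (the internal vertices of the reversed path indeed keep their out-degrees, $S$ is closed under out-edges, and the case $u=v_0$ is harmless since $d^+(v_0)\ge d+1$), and finiteness of $G$ guarantees the minimizer exists. The classical proofs in the literature instead go through Hall's theorem or network flows; your potential-function route buys a short, purely elementary argument, at the cost of being non-algorithmic compared with the flow formulation, and your closing remark about weaker potentials is a fair justification for the choice of the sum of squares, though any argument forcing reachable vertices to have out-degree at least $d$ would do.
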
 

\section{Proof of Theorem \ref{Main-thm}}   

In this section, we prove Theorem \ref{Main-thm}.
%  this improves the upper bound of $\chi'_{k}(G)$ %for graph $G$ 
% given in Theorem \ref{BCK}. already been
The proof uses the following lemma, which was  available in \cite{Yang}.  
For the completeness of this paper, we present its short proof here. 

%If $G$ is a graph with $\mad(G) \leq 2d$,   
%If  graph  $G$ has an orientation $\vec{G}$ such that $\Delta^{+}(\vec{G} ) \le d$,  

%for $v \in V$, let $\partial_{\vec{G}}^{-}(v)$ denote   the  set of directed edges in $\vec{E}$ with $v$ as their head, $\partial_{\vec{G}}^{+}(v)$ denote the set of directed edges in $\vec{E}$ 
%% arcs in $D$ 
%with $v$ as their tail; we name $\partial_{\vec{G}}^{-}(v)$ the in-edges of $v$, and  $\partial_{\vec{G}}^{+}(v)$ the out-edges of $v$ (in $\vec{G}$).    
%Let $d_{\vec{G}}^{-}(v) = |\partial_{\vec{G}}^{-}(v)|$ and $d_{\vec{G}}^{+}(v) = |\partial_{\vec{G}}^{+}(v)|$,      

\begin{lemma} (\cite[Lemma 1.7]{Yang}, adapted) \label{HM-LL} 
	Let $d \ge 0$ be an integer. 
	If an oriented graph  $\vec{G}$ has  $\Delta^{+}(\vec{G} ) \le d$,   
	then there exists a linear order $\sigma$ of $V(\vec{G})$, such that for any  vertex 	$u \in V(\vec{G})$, the number of vertices that are the in-neighbors of  $u$, and precede $u$ in $\sigma$ is at most $d$.   
\end{lemma}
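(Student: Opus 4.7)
The plan is to build the linear order $\sigma$ backwards, from the last position to the first, by a greedy argument based on an averaging observation applied to the induced subgraph on the still-unplaced vertices.

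Concretely, I would proceed as follows. Label the positions of $\sigma$ as $1, 2, \ldots, n$ where $n = v(\vec{G})$, and plan to fill them in the order $n, n-1, \ldots, 1$. At each stage $i$, let $R \subseteq V(\vec{G})$ denote the set of vertices not yet placed (so $|R| = i$), and let $\vec{G}[R]$ denote the oriented subgraph of $\vec{G}$ induced by $R$. I will choose the vertex to occupy position $i$ to be any vertex $u \in R$ whose in-degree in $\vec{G}[R]$ is at most $d$. After placing $u$, remove it from $R$ and repeat.

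The key observation is that such a $u$ always exists. Indeed, since $\Delta^{+}(\vec{G}) \le d$ and out-degrees only decrease when passing to an induced subgraph, every vertex of $\vec{G}[R]$ has out-degree at most $d$. Therefore
$$\sum_{v \in R} d^{-}_{\vec{G}[R]}(v) \;=\; e(\vec{G}[R]) \;=\; \sum_{v \in R} d^{+}_{\vec{G}[R]}(v) \;\le\; d\,|R|,$$
so the average in-degree in $\vec{G}[R]$ is at most $d$, and some $u \in R$ satisfies $d^{-}_{\vec{G}[R]}(u) \le d$.

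Finally, I verify the required property. Let $u$ be the vertex assigned to position $i$, and let $R$ be the unplaced set at the moment of that assignment. The vertices strictly preceding $u$ in $\sigma$ are precisely the ones filling positions $1, \ldots, i-1$, which are exactly the vertices of $R \setminus \{u\}$. Hence the in-neighbors of $u$ preceding $u$ in $\sigma$ are exactly the in-neighbors of $u$ lying in $R$, and by the choice of $u$ their number is at most $d$. There is no real obstacle here; the only subtlety worth stating cleanly is the direction of the construction (placing vertices from the last position backwards), which is what makes ``preceding in $\sigma$'' coincide with ``still unplaced.''
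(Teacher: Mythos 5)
Your proof is correct and is essentially the same as the paper's: both build $\sigma$ from the last position backwards, choosing at each step a vertex of in-degree at most $d$ in the induced subgraph on the unplaced vertices, which exists by the same averaging argument (sum of in-degrees equals sum of out-degrees, bounded by $d|R|$).
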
    
\begin{proof} 	
	We recursively construct a linear ordering $\sigma = v_{1}v_{2} \ldots v_{n}$ of
	$V = V(\vec{G})$ as follows. Suppose that we have constructed the
	final sequence $v_{i+1} \ldots v_{n}$ of $L$. (If $i=n$ then this sequence is empty.) Let $M=\left\{ v_{i+1}, \ldots, v_{n} \right\} $ be the set of vertices
	that have already been ordered and $U=V-M$ be the set of vertices
	that have not yet been ordered. Let $\vec{G_{U}} \subseteq \vec{G}$ 
	be the subgraph of $\vec{G}$ induced by $U$. If we have not yet finished 
	constructing $\sigma$, we choose $v_{i}\in U$ so that 
	$d_{\vec{G_U}}^-(v_i)$ is minimal in $\vec{G_U}$. %Note that  
	Since  $\sum_{v \in U} d_{\vec{G_U}}^{-}(v) = \sum_{v \in U}  d_{\vec{G_U}}^{+}(v)$,   and $\Delta^{+}\left(\vec{G_U}\right) \leq \Delta^{+}\left( \vec {G}\right) \le d$,  we have  $d_{\vec{G_U}}^-(v_i) \leq  d$. This proves the lemma.    
\end{proof}

Combining Lemma \ref{HM-LL} and  the techniques used in \cite{botler}, we prove the following lemma, which is the key to the proof of Theorem \ref{Main-thm}.   

%The proof indeed combines the Harmonious Strategy and the techniques used in \cite{botler}, which provides a new and interesting application of the Harmonious Strategy to a non-game problem.    
%
%\begin{lem} \label{botler3.1}
% Let $d \ge 0$ be an integer and $G$ be a graph with $\Delta^{*}(G)\leq d$, then $\chi'_{k}(G)\leq 7d+2k-2$.
%\end{lem} 

\begin{lemma} \label{Key}
	Let $d \ge 1$ be an integer. If a graph $G$ has an orientation $\vec{G}$ such that  $\Delta^{+}(\vec{G} ) \le d$, then 
	$\chi'_{k}(G)\leq 7d+2k-3$. 
\end{lemma}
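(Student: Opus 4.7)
The plan is to apply Lemma \ref{HM-LL} to obtain a convenient vertex ordering and then to build a $\chi'_k$-coloring incrementally. By Lemma \ref{HM-LL}, there is a linear ordering $\sigma = v_1 v_2 \cdots v_n$ of $V(\vec{G})$ in which every $v_i$ has at most $d$ in-neighbors among $\{v_1, \ldots, v_{i-1}\}$. Combined with the hypothesis $\Delta^+(\vec{G}) \le d$, this bounds the \emph{back-degree} of each $v_i$ (its number of neighbors among $\{v_1, \ldots, v_{i-1}\}$) by $2d$: at most $d$ of those edges are back-in-edges, and at most $d$ are back-out-edges (a subset of $v_i$'s $\le d$ total out-arcs). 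Thus $G$ is $2d$-degenerate with respect to $\sigma$, and in particular $v_n$ has degree at most $2d$ in $G$.

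I would then proceed by induction on $v(G)$: delete $v_n$, note that the restricted orientation on $G - v_n$ still satisfies $\Delta^+ \le d$, apply the inductive hypothesis to obtain a $\chi'_k$-coloring of $G - v_n$ with at most $7d + 2k - 3$ colors, and extend by assigning colors to the $\le 2d$ edges incident to $v_n$. Giving each such edge a distinct color automatically makes $v_n$'s degree per new color equal to $1 \equiv 1 \pmod k$, so the mod $k$ condition at $v_n$ is for free; the real work lies at the other endpoints.

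The main obstacle is preserving the mod $k$ condition at each neighbor $v_j$ of $v_n$ while reusing colors. If the color $c$ chosen for edge $\{v_j, v_n\}$ is not currently used at $v_j$, then $v_j$'s degree in $c$ becomes $1$, which is fine; but if $c$ is already present at $v_j$, its degree jumps from $\equiv 1$ to $\equiv 2 \pmod k$ and must be \emph{repaired} by recoloring $k-1$ additional edges at $v_j$---each currently in a color in which $v_j$ has degree exactly $1$---into color $c$, lifting $v_j$'s degree in $c$ to $k+1 \equiv 1 \pmod k$ while cleanly evicting $v_j$ from each of those $k-1$ co-opted classes. For this to work globally, every vertex must maintain a reservoir of such matching-type colors of size at least $k-1$ throughout the sweep. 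I would accordingly partition the palette of $7d + 2k - 3$ colors into a greedy matching block (chosen via a Hall-type argument enabled by the back-degree bound $2d$), an absorption block that accommodates the repair moves, and a small pool of $O(k)$ safety-net colors for corner cases. The delicate verification, and the reason the coefficient of $d$ is $7$ rather than $2$, is that these blocks must remain simultaneously feasible as the induction unfolds; the out-degree cap $d$ is what controls the cumulative demand on each vertex's palette and keeps the total within $7d + 2k - 3$.
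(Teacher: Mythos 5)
There is a genuine gap, and it sits exactly where you say the ``real work'' lies. Your repair move is not sound as stated: when you recolor an edge $v_ju$ at $v_j$ from some class $c'$ (in which $v_j$ has degree $1$) into the class $c$ of the new edge $v_jv_n$, you fix the count at $v_j$, but you change the colors seen by the \emph{other} endpoint $u$, whose degree in $c'$ drops from $\equiv 1$ to $\equiv 0 \pmod k$ and whose degree in $c$ rises by one; each repair thus creates fresh violations that would have to be repaired in turn, and nothing in your setup controls this cascade. Moreover, your induction hypothesis is only ``some $\chi'_k$-coloring of $G-v_n$ with $7d+2k-3$ colors,'' which is too weak to supply the ``reservoir of at least $k-1$ matching-type colors at every vertex'' that your absorption step needs: an arbitrary valid coloring of $G-v_n$ carries no such structure, and you never formulate or prove an invariant that would provide it. The announced blocks (greedy matching block, absorption block, safety-net colors) and the Hall-type feasibility are exactly the content of the lemma, and they are asserted rather than verified.

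For comparison, the paper's proof avoids recoloring altogether: it sweeps the ordering of Lemma \ref{HM-LL} from $v_1$ forward and, at each step, colors \emph{all} still-uncolored edges at the current vertex, including those going to later vertices, and it keeps the directional information ($\le d$ out-neighbors anywhere, $\le d$ in-neighbors among earlier vertices) instead of collapsing it to $2d$-degeneracy. Out-edges receive colors from a palette $C_1$ of size $3d-1$, chosen distinct at both endpoints, so every $C_1$-class meets each vertex at most once; in-edges receive colors from a palette $C_2$ of size $4d+2k-2$, where a subset $A$ of $d+k$ colors is used to build classes of size $\equiv 1 \pmod k$ at the current vertex via a maximal-subset counting argument (the rigorous analogue of your absorption, performed locally at one vertex per step), the at most $d+k-1$ leftover in-edges get distinct colors from the remaining set $B$, and a ``forbidden colors'' rule guarantees that each color appears exactly once at its not-yet-processed endpoint, so the condition there is settled when that vertex is processed later. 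If you want to salvage your deletion-based induction, you would have to strengthen the induction hypothesis to carry invariants of this kind; as written, the extension step cannot be completed.
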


%  is a graph with $\mad(G) \leq 2d$, then  
% Let $V(G)=\{v_{1}, v_{2},\ldots, v_{n}\}$ be an ordering of $V(G)$, which is the order in which the vertices of $G$ are processed during the Harmonious Strategy.

% If $G$ is a graph with $\mad(G) \leq 2d$, then by Theorem \ref{decom},    $G$ has an orientation $\vec{G}$ such that $\Delta^{+}(\vec{G} ) \le d$.  		

\begin{proof} 	
	If a graph $G$ has an orientation $\vec{G}$ such that  $\Delta^{+}(\vec{G} ) \le d$, by applying Lemma \ref{HM-LL}, we can suppose   
	linear ordering $\sigma := v_{1}v_{2} \ldots v_{n}$ of $V(G)$ satisfying that for any  vertex 	$u \in V(\vec{G})$, the number of vertices that are the in-neighbors of  $u$, and precede $u$ in $\sigma$ is at most $d$.  
	
	%For a directed edge $\overrightarrow{vu}$ of  $\vec{G}$, if $v$ precede $u$ in $\sigma$, we say 
	
	%By using 
	%we color $G$ by coloring the edges incident with $v_{i}$ for each $v_i \in \{v_1, \ldots, v_{n-1}\}$ in turn, 
	
	Following the above linear ordering $\sigma$,  we give a $\chi'_{k}$-coloring   of  $G$ by coloring the edges incident with $v_{i}$ for each $v_i \in \{v_1, \ldots, v_{n-1}\}$ in turn.   
	At step $v_i$, we name  {\em this procedure as processing vertex $v_i$, which means  that we color all the edges incident with $v_{i}$ that are not colored yet at this time}. 
	After we have finished processing vertex $v_i$, we shall maintain that we have a $\chi'_{k}$-coloring of the graph spanned by the edges incident with $v_{1}, \ldots, v_{i}$, we call this a  {\em good partial $\chi'_{k}$-coloring after step $v_i$}. 
	
	%such that  %at each step $i$,  
	%after we have finished step $i$, 
	
	% when $v_{j}$ is processed.  
	%

	%The coloring method is as follows:  arbitrarily
	To define the  coloring method, 
	we partition all the colors  into two sets $C_{1}$ and $C_{2}$  such that $|C_{1}|=3d-1$ and $|C_{2}| = 4d+2k-2$, note that $|C_{1}| + |C_{2}| = 7d+2k-3$.  For  each $1\leq i \leq n-1$, we use the colors in $C_{1}$ to color the uncolored out-edges of $v_{i}$ and use the colors in $C_{2}$ to color the uncolored in-edges of $v_{i}$. 
	Equivalently, for any directed edge $uv$ $(u \rightarrow v$ in $\vec{G})$, if $u$ is processed before $v$, then edge $uv$ is colored with a color in $C_{1}$;   if $v$ is processed before $u$, then edge $uv$ is colored with a color in $C_{2}$.
	%  it is colored with a color in $C_{2}$. 

	By induction on $i$, we give a good partial $\chi'_{k}$-coloring of  $G$ after    processing $v_i$. % in turn.    
	For the induction hypothesis, suppose when we begin to process vertex $v_i$, all the edges  that are incident with a vertex $v$ that precedes $v_i$ in $\sigma$ have already been colored.
	% and used colored in $C_1$ or $C_2$  as described above.  
	
	For each $1 \leq i \leq n-1$, when $v_{i}$ is processed, let $U^{+}(v_i)$ denote the unprocessed out-neighbor(s) of $v_{i}$, i.e.,  $U^{+}(v_i) = N_{\vec{G}}^+(v_i)  \cap  \{v_{i+1}, \ldots, v_{n}\}$;  let $U^{-}(v_i)$ denote the unprocessed in-neighbor(s) of $v_{i}$, i.e.,  $U^{-}(v_i) = N_{\vec{G}}^-(v_i)  \cap \{v_{i+1}, \ldots, v_{n}\}$.  
	When we process  $v_{i}$, we color the uncolored out-edges $\{v_{i}v_{j}: v_{j} \in  U^{+}(v_i)\}$ such that all out-edges of $v_i$ are colored with distinct colors;  
	% in $C_1$  or $C_2$, 
	to  color the uncolored in-edges $\{v_{j}v_{i} : v_{j}\in U^{-}(v_i)\}$, we use colors in $C_2$ that are different than having been used in the out-edges of  $v_i$ (refer $X(v_i)$ in the following paragraph);  and we do the coloring in this order.  
	
	For the induction step, suppose we process vertex $v_i$, where  $i \in \{1, 2, \ldots, n-1\}$. 
	Suppose $N_{\vec{G}}^{+}(v_{i}) \cap \{v_{1}, \ldots, v_{i-1}\}=\{x_{1}, \ldots, x_{\ell}\}=X(v_i)$. Since $\Delta^{+}(\vec{G} ) \le d$,	%we have 
	$|X(v_i)| \leq  d$.     
	Suppose $N_{\vec{G}}^{-}(v_{i})\cap \{v_{1}, \ldots, v_{i-1}\}=\{y_{1}, \ldots, y_{r}\} = Y(v_i)$, then by Lemma \ref{HM-LL}, $|Y(v_i)| \leq d$.    
	For the induction  hypothesis, we suppose each edge $y_{j}v_{i}$, where $y_{j} \in Y(v_i)$, is  colored with a color in $C_{1}$; and each edge $v_{i}x_{j}$, where $x_{j} \in X(v_i)$, is colored with a color in $C_{2}$.  
	Now we process vertex $v_i$, i.e., color the remaining uncolored edges incident with $v_{i}$, we do this in two steps.  
	
	In the first step, we use the colors in $C_{1}$ to color the uncolored out-edges $\{v_{i}v_{j}: v_{j} \in U^{+}(v_i)\}$, such that all the out-edges of $v_i$ have distinct colors. 
	We show that we can do this for any edge $v_{i}v_{j}$ with $v_{j} \in U^{+}(v_i)$.   
	For vertex $v_j$, since  $v_{j} \in U^{+}(v_i)$, $v_j$ has not been processed yet. 
	Therefore, by  Lemma \ref{HM-LL} and the induction hypothesis, the in-edges of $v_j$ that have been colored with colors in $C_{1}$ is at most $d-1$ (note that in the counting we removed the in-edge $v_iv_j$ of $v_j$).   
	For vertex $v_i$, when we begin to process vertex $v_i$, by the induction hypothesis, the edges incident with $v_i$ and colored with colors in $C_1$ are  $y_{j}v_{i}$, where $y_{j} \in Y(v_i)$. During  processing vertex $v_i$, for the edge $v_{i}v_{j}$ with $v_{j} \in U^{+}(v_i)$, at most $|U^{+}(v_i)|-1$  edges incident with $v_i$ are colored with colors in $C_1$. 
	Note that,   
	$$|C_{1}|-|Y(v_i)|-(|U^{+}(v_i)|-1)-(d-1) \geq (3d-1) - d - (d-1) - (d-1) \ge 1.$$ 
	This proves that there is a color left in $C_1$ for $v_{i}v_{j}$ with $v_{j} \in  U^{+}(v_i)$.   
	
	% since at this time $v_{i}^{+}$ is an unprocessed vertex, and by Lemma \ref{HM-LL} and the induction hypothesis, at most  $d_{\vec{G}}^{+}(v_{i}^{+})\leq d$ of the in-edges of $v_{i}^{+}$ are colored with  the color in $C_{1}$, so we have $|C_{1}|-|Y(v_i)|-(|U^{+}(v_i)|-1)-d \geq 3d-d-(d-1)-d>0$.

	%We can do this, since at this time $v_{i}^{+}$ is an unprocessed vertex, and by Lemma \ref{HM-LL} and the induction hypothesis, at most  $d_{\vec{G}}^{+}(v_{i}^{+})\leq d$ of the in-edges of $v_{i}^{+}$ are colored with  the color in $C_{1}$, so we have $|C_{1}|-|Y(v_i)|-(|U^{+}(v_i)|-1)-d \geq 3d-d-(d-1)-d>0$. 

	%the edges between these vertices and $v_{i}$ form the out-edges of $v_{i}$ and colored with the colors in $C_{2}$, 
	
	In the second step, we color the uncolored in-edges $R(v_i) =  \{v_{j}v_{i} : v_{j} \in U^{-}(v_i)\}$ of $v_{i}$. 
	Note that $v_{i}$ has at most $|X(v_i)| \leq d$ processed out-neighbors before $v_{i}$ is processed.  
	Observe that for any edge $x_ix_{\ell'}$ with $x_{\ell'} \in X(v_i)$,   $x_ix_{\ell'}$ is an in-edge of $x_{\ell'}$. By the induction hypothesis, $x_ix_{\ell'}$ is colored when $x_{\ell'}$ is processed, and is colored with a   color in $C_{2}$.  
	After removing the colors that used by edges $x_ix_{\ell'}$ with $x_{\ell'} \in X(v_i)$, there are at least $|C_2| - |X(v_i)| \ge 4d+2k-2 -d = 3d+2k-2$ colors left in  $C_{2}$ that can be used to color edges in $R(v_i)$.

	We partition these left colors in $C_2$ arbitrarily into sets $A(v_i)$ and $B(v_i)$ so that $|A(v_i)| = d+k$ and $|B(v_i)| \geq 2d+k-2$. For each $v_{j} \in U^{-}(v_i)$  is an unprocessed in-neighbor of $v_{i}$,
	we say that a color $c$ is $forbidden$ at $v_{j}$ if there is out-edge of $v_{j}$ is colored with $c$, and we call the colors in $A(v_i)$ that are not forbidden at $v_{j}$ $available$ at $v_{j}$. 
	Note that at most $d-1$ out-edges of $v_{j}$ are colored (removing the out-edge $v_jv_i$ of $v_j$ in the counting). This implies that at least $k+1$ colors in  $A(v_i)$ are available at $v_{j}$.

	Let $R^{*}(v_i)$ be the maximal subset of $R(v_i)$ that can be colored with colors in $A(v_i)$ in a way such that:   
	\begin{itemize}
		\item[(a)]  each in-edge $v_{j}v_{i}\in R^{*}(v_i)$ of $v_i$ is colored with a color available at $v_{j}$;
		\item[(b)] the number of edges in $R^{*}(v_i)$ colored with any color is   congruent to $1~(\moddd k)$. 
	\end{itemize} 
	Let $\bar{R}(v_i) = R(v_i) \setminus R^{*}(v_i)$ be the set of the remaining edges in $R(v_i)$. We  claim  that $|\bar{R}(v_i)| < |A(v_i)|$.    
	
	Assume otherwise that $|\bar{R}(v_i)| \geq |A(v_i)|$,  and suppose $A(v_i) =\{a_{i} : 1 \le i \le d+k\}$, $\bar{R}(v_i) = \{e_{j}=w_{j}v_{i} : w_{j} \in U^{+}(v_i),  ~ 1 \le j \le t, \mbox{ and } t\geq d+k\}$.   
	%Now we can construct the 
	We define an auxiliary bipartite graph  $T$
	%$T[A(v_i), \bar{R}(v_i)]$ 
	with vertices bipartition $A(v_i)$ and  $\bar{R}(v_i)$, edges   
	$E(T) = \{a_{i}e_{j} : \mbox{where $e_{j}=w_{j}v_{i}$, $a_{i}$ is available at } w_{j}\}$.  
	
	Since, for each  $w_{j} \in U^{+}(v_i)$,  there are  at least $k+1$ colors in   $A(v_i)$ available at $w_{j}$,   we have  $d_{T}(e_{j}) \geq k+1$ for every $e_{j} \in \bar{R}(v_i)$. 
	Therefore, 
	\[
	\sum_{a_i \in A(v_i)} d_{T}(a_{i})   = |E(T)| = \sum_{e_j \in  \bar{R}(v_i)}d_{T}(e_{j}) \geq (k+1)t \ge (k+1)(d+k). 
	\]
	Since $|A(v_i)| = d+k$, we concluded that there exists a color $a_{i}$ in $A(v_i)$, $d_{T}(a_{i}) \ge k+1$, which means that color $a_{i}$ is available on at least $k + 1$ edges in $\bar{R}(v_i)$. 
	
	If some edge in $R^{*}(v_i)$ is already colored with $a_{i}$, then we color $k$  edges in $\bar{R}(v_i)$ with color $a_{i}$. If no edge in $R^{*}(v_i)$ is colored  with $a_{i}$, then we color $k+1$ edges in $\bar{R}(v_i)$ with color $a_{i}$. Both of these cases contradict with the maximality of $R^{*}(v_i)$. This proves that  $|\bar{R}(v_i)| < |A(v_i)| = d+k$. 
	
	Finally we show that we can color all the edges in $\bar{R}(v_i)$ with distinct colors in $B(v_i)$. 
	For this, it suffices to note that, for each $w_{j}v_{i} \in \bar{R}(v_i)$, there  are at most $d-1+|\bar{R}(v_i)|-1 \leq 2d+k-3<|B(v_i)|$ colors of $B(v_i)$ that are either forbidden at $w_{j}$, or were used on previous edges of $\bar{R}(v_i)$.
\end{proof}

%The main result of this paper can be obtained from the above lemma as follows. 

%Our main result can be obtained from 

We prove our main result by using Lemma \ref{botler1}, Theorem \ref{decom}, Lemma \ref{HM-LL}, and Lemma \ref{Key}. The proof is similar to Theorem 5 in \cite{botler}, 
the differences are applications of Theorem \ref{decom} and Lemma \ref{HM-LL} here,  and  Lemma \ref{Key} is stronger than the corresponding one in \cite{botler}.  

% 
%\begin{theorem} \label{every}
%For every graph $G$ we have $\chi'_{k}(G) \leq 170k - 87$.  
%\end{theorem}

\bigskip
\noindent
{\bf Theorem \ref{Main-thm}}.
{\it For every graph $G$ we have $\chi'_{k}(G) \leq 177k - 93$.}
\bigskip

\begin{proof}   
	Let $H$ be a maximal subgraph of $G$ such that $d_{H}(v) \equiv 1 ~ (\moddd k)$ for every $v\in V(H),$ and let $G'= G\backslash E(H)$. Then  $V(G) \setminus V(H)$ is independent. Since otherwise, %assume that 
	there exists an edge $e$ with both ends in $V(G)\setminus V(H)$; then $H' = H+e$ would be a graph for  which $d_{H'}(v) \equiv 1 ~ (\moddd k)$; but this contradicts the maximality of $H$.  
	
	Similarly, by the maximality of $H$, $G'[V(H)]$ has  no nonempty $k$-divisible  subgraph.  	 
	By Lemma \ref{botler1}, for every nonempty $J \subseteq  G'[V(H)]$, we have $e(J) <  2(12k-6) v(J)$. Thus,  
	\[
	\mad(G'[V(H)])  = \max_{J \subseteq G' [V(H)]} 
	\frac{2 e(J)}{v(J)}  
	< \max_{J\subseteq G' [V(H)]}\frac{2 (24k - 12)v(J)}{v(J)}    
	= 2(24k - 12).
	\]		
	By Theorem \ref{decom},  $ G'[V(H)]$ has an orientation   $\overrightarrow{G'[V(H)]}$ such that   $\Delta^{+}(\overrightarrow{G'[V(H)]}) \le 24k-12$.  
	
	For every vertex $u \in V(H)$, by the maximality of $H$, $u$ has at most $k-1$ neighbors in $ V(G) \setminus V(H)$. 	
	For every edge $e = uv $ in $G'$ with $u \in V(H)$ and $v \in V(G) \setminus V(H)$, we orient $e$ from $u$ to $v$.   
	
	Thus there exists an orientation  $\vec{G'}$ of $G'$, such that $\Delta^{+}(\vec{G'}) \le  25k-13$.  By Lemma  \ref{Key},  
	there exists a $\chi'_{k}$-coloring of $G'$ using at most $177k-94$ colors. Then color all $E(H)$ with a new color, this proves the theorem.  
\end{proof} 

%Note that  
%all the vertices in  $V(G) \setminus V(H)$ has out degree $0$; $ G'[V(H)]$ has an orientation   $\overrightarrow{G'[V(H)]}$ such that   $\Delta^{+}(\overrightarrow{G'[V(H)]}) \le 24k-12$; for every vertex $u \in V(H)$, $u$ has at most $k-1$ neighbors in $ V(G) \setminus V(H)$.  

\remark In the above proof of Theorem \ref{Main-thm},  for all the edges $e = uv $ in $G'$ with $u \in V(H)$ and $v \in V(G) \setminus V(H)$, we can orient $e$ from $u$ to $v$.      
Then define a linear ordering $\sigma'$ beginning with vertices in $ V(G) \setminus V(H)$, and concatenating a linear ordering of vertices in $G'[V(H)]$ that has been  proved existing by Lemma \ref{HM-LL}, but use $\Delta^{+}(\overrightarrow{G'[V(H)]})  \le 24k-12$ here (instead of using of $\Delta^{+}(\vec{G'}) \le  25k-13$ as the proof of Theorem \ref{Main-thm}).  
By using this  $\sigma'$, and the above orientation, following the methodology of  Lemma \ref{Key}, we can first color all the edges incident with $ V(G) \setminus V(H)$, and then the edges in  $G'[V(H)]$, by processing the vertices one by one following   linear ordering $\sigma'$.     
This coloring process can be used to prove that  $\chi'_{k}(G) \leq 171k - 87$.   
The  proof for this comes from tweaking the proofs of Lemma \ref{Key}.   
As the authors in \cite{botler} have mentioned, we think  we would be far from  the truth still (refer Conjecture \ref{conj}), we skip the details of this small  improvement here for the readability of this paper.

%\acknowledgements
%\label{sec:ack}
%At the end of the manuscript, right before the bibliography you might
%want to place an acknowledgment. This can be easily done by using the 
%command \verb!\acknowledgements! as you can see here.

\nocite{*}
\bibliographystyle{abbrvnat}
% use the following instead if you encounter problems 
%\bibliographystyle{alpha}
\bibliography{mkkk-2024-dmtcs}

\begin{thebibliography}{7}
\providecommand{\natexlab}[1]{#1}
\providecommand{\url}[1]{\texttt{#1}}
\expandafter\ifx\csname urlstyle\endcsname\relax
  \providecommand{\doi}[1]{doi: #1}\else
  \providecommand{\doi}{doi: \begingroup \urlstyle{rm}\Url}\fi

\bibitem[Botler et~al.(2023)Botler, Colucci, and Kohayakawa]{botler}
F.~Botler, L.~Colucci, and Y.~Kohayakawa.
\newblock The mod {$k$} chromatic index of graphs is {$O(k)$}.
\newblock \emph{J. Graph Theory}, 102\penalty0 (1):\penalty0 197--200, 2023.
\newblock ISSN 0364-9024,1097-0118.
\newblock URL \url{https://doi.org/10.1002/jgt.22866}.

\bibitem[Hakimi(1965)]{SL-hak}
S.~L. Hakimi.
\newblock On the degrees of the vertices of a directed graph.
\newblock \emph{J. Franklin Inst.}, 279:\penalty0 290--308, 1965.
\newblock ISSN 0016-0032.
\newblock URL \url{https://doi.org/10.1016/0016-0032(65)90340-6}.

\bibitem[Mader(1972)]{Mader}
W.~Mader.
\newblock Existenz {$n$}-fach zusammenh\"angender {T}eilgraphen in {G}raphen
  gen\"ugend grosser {K}antendichte.
\newblock \emph{Abh. Math. Sem. Univ. Hamburg}, 37:\penalty0 86--97, 1972.
\newblock ISSN 0025-5858,1865-8784.
\newblock URL \url{https://doi.org/10.1007/BF02993903}.

\bibitem[Pyber(1992)]{pyber}
L.~Pyber.
\newblock Covering the edges of a graph by {$\ldots$}.
\newblock In \emph{Sets, graphs and numbers ({B}udapest, 1991)}, volume~60 of
  \emph{Colloq. Math. Soc. J\'anos Bolyai}, pages 583--610. North-Holland,
  Amsterdam, 1992.
\newblock ISBN 0-444-98681-2.

\bibitem[Scott(1997)]{scott}
A.~D. Scott.
\newblock On graph decompositions modulo {$k$}.
\newblock \emph{Discrete Math.}, 175\penalty0 (1-3):\penalty0 289--291, 1997.
\newblock ISSN 0012-365X,1872-681X.
\newblock URL \url{https://doi.org/10.1016/S0012-365X(96)00109-4}.

\bibitem[Thomassen(2014)]{Thomassen}
C.~Thomassen.
\newblock Graph factors modulo {$k$}.
\newblock \emph{J. Combin. Theory Ser. B}, 106:\penalty0 174--177, 2014.
\newblock ISSN 0095-8956,1096-0902.
\newblock URL \url{https://doi.org/10.1016/j.jctb.2014.01.002}.

\bibitem[Yang(2009)]{Yang}
D.~Yang.
\newblock Generalization of transitive fraternal augmentations for directed
  graphs and its applications.
\newblock \emph{Discrete Math.}, 309\penalty0 (13):\penalty0 4614--4623, 2009.
\newblock ISSN 0012-365X,1872-681X.
\newblock URL \url{https://doi.org/10.1016/j.disc.2009.02.028}.

\end{thebibliography}
\label{sec:biblio}

\end{document}